\newtheorem{thm}{Theorem}[section]
\newtheorem{theorem}[thm]{Theorem}
\newtheorem{lemma}[thm]{Lemma}
\newtheorem{prop}[thm]{Proposition}
\theoremstyle{definition}
\theoremstyle{definition}
\newtheorem{definition}[thm]{Definition}
\newtheorem{remark[thm]}{Remark}
\newtheorem{cor}[thm]{Corollary}
\theoremstyle{remark}
\DeclareMathOperator{\cd}{{\rm cd}}
\DeclareMathOperator{\cat}{{\mbox{\rm cat$_{\rm LS}$}}}
\def\Ord{\protect\operatorname{Ord}}
\def\Wi{\widetilde}
\def\cd{\protect\operatorname{cd}}
\def\scr{\mathcal}
\def\C{{\mathbb C}}
\long\def\forget#1\forgotten{} %
\numberwithin{equation}{section}
\begin{document}

\title[Lusternik-Schnirelmann category]{
An upper bound on the LS-category in presence of the fundamental group}
\author[A.~Dranishnikov]{Alexander N. Dranishnikov}

%    Only \author and \address are required; other information is
%    optional.  Remove any unused author tags.

\address{Department of Mathematics, University
of Florida, 358 Little Hall, Gainesville, FL 32611-8105, USA}
\curraddr{}
\email{dranish@math.ufl.edu}
%\thanks{Supported by NSF, grant DMS-0904278}

%    \subjclass is required.
\subjclass[2000]{55M30 }

\date{}

\dedicatory{}

%    "Communicated by" -- provide editor's name; required.
\commby{Daniel Ruberman}

%    Abstract is required.
\begin{abstract}
We prove that
$$
\cat X\le \frac{\cd(\pi_1(X))+\dim X}{2}$$
for every CW complex $X$ where $\cd(\pi_1(X))$ denotes the cohomological
dimension of the fundamental group of $X$. We obtain this as a corollary of the  inequality
$$
\cat X\le\frac{\cat (u_X)+\dim X}{2}
$$
where $u_X:X\to B\pi_1(X)$ is a classifying map for the universal covering of $X$.
\end{abstract}

\maketitle

\section{Introduction} The reduced {\em Lusternik-Schnirelmann category}
(briefly LS-category) $\cat X$ of a topological space $X$ is the
minimal number $n$ such that there is an open cover $\{U_0,\dots,
U_n\}$ of $X$ by $n+1$ contractible in $X$ sets. We note that the LS-category 
is a homotopy invariant. The Lusternik-Schnirelmann category has many applications.
Perhaps the most famous is the classical Lusternik and Schnirelmann theorem
\cite{CLOT} which states that $\cat M$ gives a low bound
for the number of critical points on a  manifold $M$ of any smooth not
necessarily Morse function. This theorem was used by Lusternik and Schnirelmann
in their solution of Poincare's problem on the existence of three closed geodesics on a 2-sphere~\cite{LS}. In modern time the LS-category was used in the proof of the Arnold conjecture on symplectomorphisms~\cite{Ru}.

The LS-category is a numerical homotopy invariant which is difficult to compute.
Even to get a reasonable bound for $\cat $ very often is a serious problem.
In this paper we discuss only upper bounds.
For nice spaces, such as CW complexes,
it is an easy observation that $\cat X\le\dim X$. In the 40s
Grossmann~\cite{Gro} (and independently in the 50s G.W.
Whitehead~\cite{Wh}~\cite{CLOT}) proved that for simply connected CW
complexes $\cat X\le\dim X/2$. 

In the presence of the fundamental group the LS-category can be
equal to the dimension. In fact, $\cat X=\dim X$ if and only if $X$ is essential in 
the sense of Gromov. This was proven for manifolds in~\cite{KR}. For general CW complexes we refer to Proposition~\ref{essential} of this paper. We recall that an $n$-dimensional complex $X$ is called {\em inessential} if a map $u_X:X\to B\pi_1(X)$ that classifies its universal cover can be deformed to the $(n-1)$-skeleton $(B\pi_1(X))^{(n-1)}$. Otherwise, it is called {\em essential}. Typical examples of essential CW complexes are aspherical manifolds.

Yu.~Rudyak conjectured that in the case of free
fundamental group there should be the Grossmann-Whitehead type
inequality at least for closed manifolds. There were partial results
towards Rudyak's conjecture \cite{DKR},\cite{St} until it was
settled in \cite{Dr1}. Later it was shown in \cite{Dr2} (also see the followup~\cite{OS})
that the
Grossmann-Whitehead type estimate holds for complexes with the
fundamental group having small cohomological dimension.  Namely, it was shown
that $\cat X\le \cd(\pi_1(X))+\dim X/2$.

Clearly, this upper bound is far from being optimal for fundamental groups with sufficiently large cohomological dimension. Indeed, for the product of an aspherical $m$-manifold $M$ 
with the complex projective space we have
$\cat(M\times\C P^n)=m+n$ but our upper bound is $m+(m+2n)/2=\frac{3}{2}m+n$.
Moreover, our bound quits to be  useful for complexes with $\cd(\pi_1(X))\ge\dim X/2$.
The desirable bound here is $$\cat X\le \frac{\cd(\pi_1(X))+\dim X}{2}.$$
Such an upper bound was proven in~\cite{DKR} for the systolic category, a differential geometry relative of the LS-category.
Nevertheless, for the classical LS-category a similar estimate was missing until now.

In this paper we prove the desirable upper bound. We obtain such a bound as a corollary of the following inequality
$$
\cat X\le\frac{\cat (u_X)+\dim X}{2}
$$
where $u_X:X\to B\pi_1(X)$ is a classifying map for the universal covering of $X$.
We note that this inequality gives a meaningful upper bound on the LS-category for complexes
with any fundamental group. Also we note that the new upper bound gives the optimal estimate
for the above example  $M\times\C P^n$, the product of an aspherical manifold and the complex projective space. Namely, $$\cat(M\times\C P^n)\le(m+(m+2n))/2=m+n.$$

\section{Preliminaries}
The proof of the new upper bound for $\cat X$ is based on a further modification of the Kolmogorov-Ostrand multiple cover technique~\cite{Dr1}. That technique was extracted by Ostrand from the work of Kolmogorov on the 13th Hilbert problem~\cite{Os}.
Also in this paper we make use of the following well-known fact.
\begin{prop}\label{domination}
	Let $f:X\to Y$ be a homotopy domination. Then $\cat Y\le \cat X$.
\end{prop}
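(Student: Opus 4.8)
The statement is Proposition~\ref{domination}: if $f:X\to Y$ is a homotopy domination, then $\cat Y\le\cat X$. Recall that a homotopy domination means there is a map $g:Y\to X$ with $f\circ g\simeq \id_Y$. The plan is to pull back a categorical open cover of $X$ along $f$ to get one of $Y$, using $g$ and the homotopy $f\circ g\simeq\id_Y$ to transport the ``contractible in the ambient space'' property from $X$ to $Y$.

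**The main construction.** Suppose $\cat X=n$, and let $\{U_0,\dots,U_n\}$ be an open cover of $X$ by sets contractible in $X$; that is, each inclusion $U_i\hookrightarrow X$ is null-homotopic. Set $V_i:=g^{-1}(U_i)$. Since $g$ is continuous these are open, and they cover $Y$ because the $U_i$ cover $X$. It remains to check that each $V_i$ is contractible in $Y$. Consider the composite $V_i\hookrightarrow Y\xrightarrow{g} X$; by construction $g(V_i)\subseteq U_i$, so this composite factors as $V_i\to U_i\hookrightarrow X$, hence is null-homotopic (as $U_i\hookrightarrow X$ is). Now compose further with $f:X\to Y$: the map $V_i\hookrightarrow Y\xrightarrow{g}X\xrightarrow{f}Y$ is null-homotopic. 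But $f\circ g\simeq\id_Y$, so this composite is homotopic to the inclusion $V_i\hookrightarrow Y$. Therefore $V_i\hookrightarrow Y$ is null-homotopic, i.e.\ $V_i$ is contractible in $Y$. This gives a categorical open cover of $Y$ of size $n+1$, so $\cat Y\le n=\cat X$.

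**Where care is needed.** The one point that is not purely formal is the step ``$f\circ g\simeq\id_Y$ implies the inclusion of $V_i$ is homotopic to $f\circ g|_{V_i}$''—this is just precomposition of a homotopy with the inclusion, which is fine for arbitrary topological spaces, so no hypotheses beyond continuity are needed. I expect the only genuine subtlety (and the main thing to get right) is the bookkeeping of which maps factor through which subsets: specifically that $g(V_i)\subseteq U_i$, which is immediate from $V_i=g^{-1}(U_i)$, and that null-homotopy of $U_i\hookrightarrow X$ transports correctly through the two compositions. There are no convergence or point-set pathologies to worry about, and the argument does not even need $X$ or $Y$ to be CW complexes. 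If one prefers, the whole proof can be phrased in terms of the characterization of $\cat$ via sectioned fibrations (Schwarz genus of the Ganea fibration) and naturality, but the elementary open-cover argument above is the most transparent and is what I would write.
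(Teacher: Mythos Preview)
Your proof is correct and follows exactly the paper's approach: pull back a categorical cover of $X$ along the homotopy section $g$ (the paper calls it $s$) and verify each $g^{-1}(U_i)$ is contractible in $Y$. The paper's own proof is terse---it just says ``one can easily check'' that the preimages form a categorical cover---so you have in fact supplied the details the paper omits.
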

\begin{proof}
Let $s:Y\to X$ be a left homotopy inverse to $f$, i.e. $f\circ s\sim 1_Y$. Let $U_0,\dots, U_k$ be
an open cover of $X$ by sets contractible in $X$. One can easily check that
$s^{-1}(U_0),\dots,s^{-1}(U_k)$ is an open cover by sets contractible in $Y$.	
	\end{proof}

Let $\scr
U=\{U_{\alpha}\}_{\alpha\in A}$ be a family of sets in a topological
space $X$. 
The {\em multiplicity}
of $\scr U$ (or the {\em order}) at a point $x\in X$, denoted
$\Ord_x\scr U$, is the number of elements of $\scr U$ that contain
$x$.  A family $\scr U$ is a cover of $X$ if
$\Ord_x\scr U\ne 0$ for all $x$.

\begin{definition} A family $\scr U$ of subsets of $X$ is called a {\em
$k$-cover}, $k\in N$ if every subfamily of $\scr U$ that consists of $k$ sets forms a
cover of $X$.
\end{definition}
The following is obvious (see \cite{Dr1}).
\begin{prop}\label{n-cover}
A family $\scr U$ that consists of $m$ subsets of $X$ is an
$(n+1)$-cover of $X$ if and only if $\Ord_x\scr U\ge m-n$ for all
$x\in X$.
\end{prop}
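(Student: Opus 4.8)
The final statement to prove is Proposition~\ref{n-cover}: a family $\scr U$ of $m$ subsets of $X$ is an $(n+1)$-cover if and only if $\Ord_x\scr U \ge m-n$ for all $x \in X$. This is a purely combinatorial/set-theoretic fact, and the plan is to unwind both definitions and prove the two implications by a counting argument.

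First I would recall the definitions. By \defref (the $k$-cover definition), $\scr U$ being an $(n+1)$-cover means that every subfamily consisting of $n+1$ sets forms a cover, i.e.\ for each point $x$ and each choice of $n+1$ sets from $\scr U$, at least one of those $n+1$ sets contains $x$. Meanwhile $\Ord_x\scr U$ is the number of sets of $\scr U$ containing $x$. The strategy is to reformulate the condition in terms of the \emph{complementary} count: the number of sets of $\scr U$ that \emph{miss} $x$ is exactly $m - \Ord_x\scr U$.

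For the forward direction, I would argue by contrapositive. Suppose $\Ord_x\scr U < m-n$ for some $x$, i.e.\ $\Ord_x\scr U \le m-n-1$. Then the number of sets missing $x$ is $m - \Ord_x\scr U \ge n+1$. Hence one can select $n+1$ sets from $\scr U$ all of which fail to contain $x$; this subfamily of size $n+1$ is not a cover (it misses $x$), so $\scr U$ is not an $(n+1)$-cover. For the reverse direction, suppose $\Ord_x\scr U \ge m-n$ for every $x$, and let $\scr V$ be any subfamily of $n+1$ sets. For a given $x$, the sets of $\scr U$ missing $x$ number at most $m - (m-n) = n$, so at most $n$ of the $n+1$ sets in $\scr V$ can miss $x$; therefore at least one set of $\scr V$ contains $x$. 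Since $x$ was arbitrary, $\scr V$ covers $X$, proving $\scr U$ is an $(n+1)$-cover.

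There is essentially no hard part here: the whole proposition is a pigeonhole-type observation, and the paper itself flags it as ``obvious.'' The only point requiring a little care is keeping the index bookkeeping consistent—tracking that $\Ord_x$ counts sets \emph{containing} $x$ while the cover condition is naturally phrased through sets \emph{missing} $x$, and that the threshold $m-n$ corresponds precisely to allowing at most $n$ omissions among any $n+1$ chosen sets. Once the complementary reformulation (sets missing $x$ equals $m - \Ord_x\scr U$) is stated, both implications follow by comparing this count to $n+1$.
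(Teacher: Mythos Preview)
Your argument is correct and is exactly the natural counting/pigeonhole justification one would give. The paper itself provides no proof at all---it simply declares the proposition ``obvious'' and refers to \cite{Dr1}---so your write-up fills in precisely the omitted reasoning.
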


\

Let $K$ be a simplicial complex. By the definition the dual to the $m$-skeleton $K^{(m)}$ is a subcomplex 
$L=L(K,m)$ of the barycentric subdivision $\beta K$ that consists of simplices of $\beta K$ which do not intersect $K^{(m)}$. Note that $\beta K$ is naturally embedded in the join product
$K^{(n)}\ast L$. Then the following is obvious:

\begin{prop}\label{complement}
For any $n$-dimensional complex $K$ the complement $K\setminus K^{(m)}$ 
to the $m$-skeleton is homotopy equivalent to an $(n-m-1)$-dimensional complex $L$.
\end{prop}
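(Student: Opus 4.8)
The plan is to build an explicit strong deformation retraction of $K\setminus K^{(m)}$ onto $L$ using the join coordinates that $\beta K$ carries via its embedding into $K^{(m)}\ast L$, and to read the dimension of $L$ straight off the combinatorics. First I would record the dimension bound: a simplex of $\beta K$ has the form $\widehat{\sigma}_0\widehat{\sigma}_1\cdots\widehat{\sigma}_k$, where $\sigma_0\subsetneq\sigma_1\subsetneq\cdots\subsetneq\sigma_k$ is a chain of simplices of $K$ and $\widehat{\sigma}$ denotes the barycenter of $\sigma$. Such a simplex is disjoint from $K^{(m)}$ exactly when $\dim\sigma_i\ge m+1$ for all $i$, and then $m+1\le\dim\sigma_0<\dim\sigma_1<\cdots<\dim\sigma_k\le n$, which forces $k\le n-m-1$. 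Hence $\dim L\le n-m-1$.

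Next I would make the join splitting precise. For a simplex $\tau=\widehat{\sigma}_0\cdots\widehat{\sigma}_k$ of $\beta K$, let $j$ be the largest index with $\dim\sigma_j\le m$ (with $j=-1$ if no such index exists). Then $\tau'=\widehat{\sigma}_0\cdots\widehat{\sigma}_j$ is a simplex of the barycentric subdivision of $K^{(m)}$, lying in $K^{(m)}$, while $\tau''=\widehat{\sigma}_{j+1}\cdots\widehat{\sigma}_k$ is a simplex of $L$, and $\tau=\tau'\ast\tau''$. This decomposition restricts correctly to faces, so it is defined globally and refines the embedding $\beta K\subseteq K^{(m)}\ast L$. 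In particular, $K\setminus K^{(m)}=\beta K\setminus K^{(m)}$ is precisely the set of points whose $L$-coordinate, i.e. the total barycentric weight carried by those $\widehat{\sigma}_i$ with $\dim\sigma_i\ge m+1$, is nonzero; equivalently, it is the intersection of $\beta K$ with $(K^{(m)}\ast L)\setminus K^{(m)}$.

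Finally I would write down the retraction. On $(K^{(m)}\ast L)\setminus K^{(m)}$ the join parameter $t\in(0,1]$ (the weight on the $L$-coordinate) is continuous, and sliding it to $1$ along join segments gives a strong deformation retraction onto $L$: let $H_s$ send a point with parameter $t$ to the point on the same join segment with parameter $t+s(1-t)$, so $H_0=\id$, $H_1$ is the projection onto $L$, and $H_s$ fixes $L$ pointwise. Since for each simplex $\tau=\tau'\ast\tau''$ of $\beta K$ the map $H_s$ moves points of $\tau$ along the join direction of this splitting, it carries $\tau\setminus K^{(m)}$ into itself; therefore $H_s$ restricts to a strong deformation retraction of $K\setminus K^{(m)}$ onto $L$, and in particular $K\setminus K^{(m)}\simeq L$ with $\dim L\le n-m-1$, as claimed.

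The only genuine obstacle is bookkeeping: one must check that the splitting $\tau=\tau'\ast\tau''$ is natural with respect to faces of $\tau$, since this is exactly what makes both the embedding $\beta K\subseteq K^{(m)}\ast L$ and the homotopy $H_s$ well defined and continuous on all of $\beta K$. Granting that, the remaining verifications amount to the standard fact that a join $A\ast B$ minus $A$ deformation retracts onto $B$.
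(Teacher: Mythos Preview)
Your argument is correct and follows essentially the same route as the paper: both identify $L$ as the dual complex to $K^{(m)}$ in $\beta K$, use the embedding $\beta K\subset K^{(m)}\ast L$, and slide points of $K\setminus K^{(m)}$ to $L$ along the join segments (what the paper calls the ``field of intervals''). Your version simply unpacks the combinatorics of the splitting $\tau=\tau'\ast\tau''$ and the dimension count in more detail than the paper's one-line sketch.
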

\begin{proof}
The complex $L$ is the dual  to $K^{(m)}$. Clearly, $\dim L=n-m-1$. The	complement $K\setminus K^{(m)}$
can be deformed to $L$ along the  field of intervals defined by the embedding $\beta K\subset K^{(n)}\ast L$.
	\end{proof}

Let $f:X\to Y$ be a continuous map. We recall that the LS-category of $f$, $\cat f$ is the smallest number $k$ such that $X$ can be covered by $k+1$ open sets $U_0,\dots,U_k$ such that the restriction $f|_{U_i}:U_i\to Y$ of $f$ to each of them is null-homotopic. Clearly, $$\cat f\le\cat X,\cat Y.$$

We denote by $u_X:X\to B\pi$, $\pi=\pi_1(X)$, a map that classifies the universal covering
$p:\Wi X\to X$ of $X$. Thus, $p$ is the pull-back of the universal covering $q:E\pi\to B\pi$.
Here $B\pi$ is any aspherical CW complex with the fundamental group $\pi$. Thus, any map
$u:X\to B\pi$ that induces an isomorphism of the fundamental groups is a classifying map.

The following proposition is proven in~\cite{Dr3}, Proposition 4.3.
\begin{prop}\label{classifying}
	A classifying map $u_X:X\to B\pi$ of the universal covering of a CW complex $X$
	can be deformed into the $d$-skeleton $B\pi^{(d)}$ if and only if $\cat(u_X)\le d$.
\end{prop}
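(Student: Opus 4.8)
The plan is to prove the two implications separately. One direction is formal: suppose $u_X$ is homotopic to $i\circ g$, where $g\colon X\to B\pi^{(d)}$ and $i\colon B\pi^{(d)}\hookrightarrow B\pi$ is the inclusion. Since $\cat$ of a map depends only on the homotopy class of the map, $\cat(u_X)=\cat(i\circ g)$; pulling back along $g$ an open cover of $B\pi^{(d)}$ over whose members $i$ is null-homotopic shows $\cat(i\circ g)\le\cat(i)$, and $\cat(i)\le\cat(B\pi^{(d)})\le\dim B\pi^{(d)}\le d$ by the bound $\cat f\le\cat X$ for $f\colon X\to Y$ together with the elementary inequality $\cat Z\le\dim Z$ for CW complexes. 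Hence $\cat(u_X)\le d$.

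For the substantive direction, assume $\cat(u_X)\le d$, so $X$ has an open cover $U_0,\dots,U_d$ with every restriction $u_X|_{U_i}$ null-homotopic. Since $\widetilde X=u_X^{\ast}(E\pi)$ and $E\pi$ is contractible, $u_X|_{U_i}$ is null-homotopic exactly when the universal covering $p\colon\widetilde X\to X$ admits a section over $U_i$; thus the Schwarz genus of $p$ is at most $d$. By Schwarz's theorem on the genus of a fibration, this is equivalent to the $(d+1)$-fold fiberwise join $p^{\ast_X(d+1)}\colon\widetilde X^{\ast_X(d+1)}\to X$ having a global section; and since forming the fiberwise join commutes with pullback, $\widetilde X^{\ast_X(d+1)}=u_X^{\ast}\bigl((E\pi)^{\ast_{B\pi}(d+1)}\bigr)$. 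Consequently $\cat(u_X)\le d$ if and only if $u_X$ lifts, up to homotopy, through the projection $q_d\colon(E\pi)^{\ast_{B\pi}(d+1)}\to B\pi$.

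It then remains to recognize the total space of $q_d$. The $(d+1)$-fold fiberwise join of the universal principal $\pi$-bundle $E\pi\to B\pi$ is, up to fiber homotopy equivalence over $B\pi$, the bundle associated to the free diagonal $\pi$-action on the join $\pi^{\ast(d+1)}$ of $d+1$ copies of the discrete set $\pi$. As $\pi^{\ast(d+1)}$ is a $d$-dimensional simplicial complex carrying a free simplicial $\pi$-action and $E\pi$ is contractible, the total space of $q_d$ is homotopy equivalent to the $d$-dimensional complex $K:=\pi^{\ast(d+1)}/\pi$, and under this equivalence $q_d$ corresponds to the classifying map $\psi\colon K\to B\pi$ of the covering $\pi^{\ast(d+1)}\to K$. (Equivalently, $q_d$ is the $d$-th Ganea fibration of $B\pi$, whose total space is homotopy equivalent to a $d$-dimensional complex since $\Omega B\pi\simeq\pi$.) Hence $\cat(u_X)\le d$ iff $u_X$ factors up to homotopy through $\psi$; and because $\dim K\le d$, cellular approximation replaces $\psi$ by a homotopic map into the $d$-skeleton, $\psi\simeq i\circ c$ with $c\colon K\to B\pi^{(d)}$, so any factorization $u_X\simeq\psi\circ\widetilde u$ gives $u_X\simeq i\circ(c\circ\widetilde u)$, which is a deformation of $u_X$ into $B\pi^{(d)}$. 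The step I expect to be the main obstacle is exactly this identification of the iterated fiberwise join of $E\pi\to B\pi$ with the $d$-dimensional quotient $K$, compatibly with the projections to $B\pi$; granting it, the remainder is routine bookkeeping with cellular approximation and the homotopy invariance of the LS-category of a map.
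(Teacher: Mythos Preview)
Your argument is correct. Note, however, that the paper does not actually supply a proof of this proposition: it simply cites \cite{Dr3}, Proposition~4.3. So there is no in-paper proof to compare against.

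That said, your approach is exactly the Ganea--Schwarz framework the paper relies on elsewhere (see the proof of Proposition~\ref{essential}, which invokes the Ganea--Schwarz fibration $p^{B\pi}_{n-1}$), and it is essentially the argument one finds in the cited reference. The chain of equivalences you set up---$\cat(u_X)\le d$ iff the covering $\widetilde X\to X$ has sectional category at most $d$, iff the $(d+1)$-fold fiberwise join admits a section, iff $u_X$ lifts through $q_d$---is standard and correctly stated. Your identification of the total space of $q_d$ with the $d$-dimensional complex $\pi^{\ast(d+1)}/\pi$ is also right: the diagonal $\pi$-action on $\pi^{\ast(d+1)}$ is free and simplicial, and the Borel construction $E\pi\times_\pi\pi^{\ast(d+1)}$ projects to $\pi^{\ast(d+1)}/\pi$ by a map with contractible fiber $E\pi$, so the projection $q_d$ does correspond under this equivalence to the classifying map $\psi$. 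The step you flagged as the ``main obstacle'' is thus a routine consequence of the Borel construction, and the cellular approximation finish is immediate.

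One minor remark: in the easy direction you could shorten the chain to $\cat(u_X)=\cat(i\circ g)\le\cat(B\pi^{(d)})\le d$ directly, since for any composite $\cat(f\circ g)\le\min\{\cat f,\cat g\}\le\cat(\text{domain of }f)$; but what you wrote is fine.
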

The following proposition for closed manifolds was proven by Katz and Rudyak~\cite{KR},
although it was already known to Berstein in a different  equivalent formulation~\cite{Be}.
\begin{prop}\label{essential}
	For an $n$-dimensional CW complex $X$, $\cat X=n$ if and only if $X$ is essential.
\end{prop}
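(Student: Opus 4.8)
The plan is to prove the logically equivalent statement $\cat X\le n-1\iff X$ is inessential, using throughout that $\cat X\le\dim X=n$ for every CW complex. One of the two implications is essentially formal: if $\cat X\le n-1$, then $\cat u_X\le\cat X\le n-1$, so by Proposition~\ref{classifying} the map $u_X$ can be deformed into $B\pi^{(n-1)}$, i.e. $X$ is inessential. By contraposition (together with $\cat X\le n$), every essential $n$-dimensional complex has $\cat X=n$.

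The substance lies in the converse: if $X$ is inessential then $\cat X\le n-1$. I would argue through the Berstein--Schwarz class $\mathfrak b_X\in H^1(X;I(\pi))$, where $I(\pi)\subset\Z[\pi]$ is the augmentation ideal of the group ring of $\pi=\pi_1(X)$, using that this class is natural, so that $\mathfrak b_X=u_X^*(\mathfrak b_{B\pi})$. Since $X$ is inessential, Proposition~\ref{classifying} provides a factorization $u_X\simeq\iota\circ g$ with $g\colon X\to B\pi^{(n-1)}$ and $\iota\colon B\pi^{(n-1)}\hookrightarrow B\pi$ the inclusion, whence $\mathfrak b_X=g^*(\iota^*\mathfrak b_{B\pi})$. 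The $n$-th cup power $(\iota^*\mathfrak b_{B\pi})^n$ lives in $H^n\bigl(B\pi^{(n-1)};I(\pi)^{\otimes n}\bigr)$, which vanishes because $\dim B\pi^{(n-1)}=n-1<n$; hence $\mathfrak b_X^n=g^*\bigl((\iota^*\mathfrak b_{B\pi})^n\bigr)=0$. Now the Berstein--Schwarz theorem characterizing maximal category (for closed manifolds this is \cite{KR}; in Berstein's cohomological formulation \cite{Be} it applies to all CW complexes and says that an $n$-dimensional complex $Y$ satisfies $\cat Y=n$ exactly when $\mathfrak b_Y^n\ne0$) yields $\cat X\le n-1$. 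Together with the first paragraph this gives the asserted equivalence.

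I expect the genuinely non-formal ingredient, hence the main obstacle, to be the ``only if'' half of the Berstein--Schwarz theorem just used, namely that $\mathfrak b_X^n=0$ already forces $\cat X<n$. This cannot be obtained by the naive device of pulling a categorical cover of $B\pi^{(n-1)}$ back along $g$: doing so only recovers $\cat u_X\le n-1$, which is strictly weaker than $\cat X\le n-1$ --- for instance $u_{S^2}$ is null-homotopic, so $\cat u_{S^2}=0$, while $\cat S^2=1$. As a consistency check I note that the converse implication is also recovered, a posteriori, from the main inequality of the paper: inessentiality gives $\cat u_X\le n-1$, hence $\cat X\le\frac{(n-1)+n}{2}=n-\frac12$, so $\cat X\le n-1$; but because Proposition~\ref{essential} is used only for orientation in the introduction, the Berstein--Schwarz argument above keeps it independent of the later sections.
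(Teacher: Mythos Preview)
Your argument for ``essential $\Rightarrow \cat X=n$'' coincides with the paper's. For the converse you take a genuinely different route: you invoke the Berstein--Schwarz class $\mathfrak b_X\in H^1(X;I(\pi))$ and the theorem that an $n$-complex has $\cat X=n$ iff $\mathfrak b_X^{\,n}\ne 0$, reducing everything to the vanishing of an $n$-th cup power pulled back from an $(n-1)$-dimensional complex. The paper instead argues directly with Ganea--Schwarz fibrations: it lifts $u_X$ to $G_{n-1}(B\pi)$ using the $(n-1)$-connectivity of the Ganea fiber, forms the pull-back over $X$ to get a section of an intermediate fibration, and then shows this section lifts further to $G_{n-1}(X)$ by proving the homotopy fiber of the comparison map is $(n-1)$-connected via the iterated-join description $\ast_n\Omega(-)$ of Ganea fibers. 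Underneath, the two arguments are close cousins --- $\mathfrak b_X^{\,n}$ is precisely the (single) obstruction to sectioning $p^X_{n-1}$ over an $n$-complex --- but yours packages the obstruction theory into a citable black box while the paper unpacks it homotopically and stays self-contained modulo~\cite{Dr3}. One minor point on attribution: within the paper's bibliography the reference that actually establishes the Berstein--Schwarz theorem for arbitrary CW complexes (including the delicate case $n=2$) is~\cite{DR} rather than~\cite{Be}. Your closing consistency check, that the implication also drops out of Theorem~\ref{main} a posteriori, is correct and a nice remark, and you are right that one should not use it as the primary argument here.
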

\begin{proof}
	Suppose that $X$ is essential. By Proposition~\ref{classifying} we obtain that $\cat(u_X)>n-1$.
	Thus, $\cat X\ge\cat(u_X)\ge n$ and, since $\dim X=n$, $\cat X=n$.

	The implication in the other direction can be derived from the proof of Theorem 4.4 in~\cite{Dr3}. 
	Here we give the sketch of the proof. Let $u_X:X\to B\pi^{(n-1)}$ be  a classifying map.
	To prove the inequality $\cat X\le n-1$ it suffices to show that the Ganea-Schwarz fibration $p_n^X:G_{n-1}(X)\to X$ admits a section. Since the fiber of the Ganea-Schwarz fibration 
	$p_n^{B\pi}$ is $(n-1)$-connected, the map $u_X$ admits a lift $f:X\to G_{n-1}(B\pi)$.
	Then the map $p'$ in the pull-back diagram 
	$$
	\begin{CD}
	G_{n-1}(X) @>q>> Z @>u_X'>> G_{n-1}(B\pi)\\
	@. @Vp'VV @Vp^{B\pi}_{n-1}VV\\
	@. X @>u_X>> B\pi^{(n-1)}\\
	\end{CD}
	$$
	admits a section  $s:X\to Z$.
	 Here $p^X_{n-1}=p'\circ q$.
	Since $X$ is $n$-dimensional, to show that $s$ has a
	lift with respect to $q$ it suffices to prove that the homotopy fiber $F$ of the map $q$ is
	$(n-1)$-connected. Note that the homotopy exact sequence of the fibration
	$$F \to(p^X_{n-1})^{−1}(x_0)\stackrel{u'}\rightarrow (p^{B\pi}_{n-1})^{-1}(y_0)$$
	where $u'$ is the restriction of $u_X'\circ q$
	to the fiber $(p^X_{n-1})^{-1}(x_0)$
	coincides with the homotopy exact sequence of the fibration
	$$F′ \to \ast_{n}\Omega(M)\stackrel{\ast\Omega(u_X)}{\longrightarrow}\ast_n\Omega(B\pi)$$
	obtained from the loop map $\Omega(u_X)$ turned into a fibration by taking the iterated join product.
	Since  $\pi_0(\Omega u_X) = 0$,
	we obtain $\pi_i(\ast_n\Omega u_X) = 0$ for $i \le n-1$ (see Proposition 2.4~\cite{Dr3}) 
	and hence $\pi_i(F) = 0$ for $i \le n- 1$. 
	\end{proof}

\section{Multiple covers of polyhedra}

For a point $x\in X$ in a CW complex $X$ by $d(x)$ we denote the dimension of the open cell $e$
containing $x$.
We call a subset $A\subset X$ in a CW complex $X$ {\em $r$-deformable}
if $A$ can be deformed in $X$ to the $r$-skeleton $X^{(r)}$. A deformation $H:A\times I\to X$
to the 0-skeleton $X^{(0)}$ is called {\em monotone} if $d(H(x,t))$ is monotonically decreasing function of $t$
for all $x\in A$.
\begin{prop}\label{cover}
	Let $X$ be a connected simplicial complex of dimension $\le N(r+1)-1$. Then for  any $m\ge N$ 
	there exists an open cover  $\scr U=\{U_1,\dots,U_m\}$ of $X$ by $r$-deformable sets
	 such that $\Ord_x\scr U\ge m-k+1$ for every $k\le N$ and all $x\in X^{(k(r+1)-1)}$. Equivalently, the restriction of $\scr U$ to the $(k(r+1)-1)$-skeleton is a $k$-cover.
	 
	 Moreover, for $r=0$ we may assume that
	 each set $U_i$ is monotone $r$-deformable. 
\end{prop}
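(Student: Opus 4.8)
The plan is to argue by induction on $N$, peeling off one block of $r+1$ consecutive dimensions at a time; write $s=r+1$. The base case $N=1$ is immediate: then $\dim X\le r$, so $X=X^{(r)}$, and one takes $U_1=\dots=U_m=X$, each $r$-deformable by the constant (hence, for $r=0$, monotone) deformation, with $\Ord_x\scr U=m$ for all $x$. Before the induction I would pass to a sufficiently fine iterated barycentric subdivision of $X$ (still called $X$), and replace the conclusion by the equivalent pointwise statement $\Ord_x\scr U\ge m-\lfloor d(x)/s\rfloor$ for every $x$, where $d(x)$ is the dimension of the carrier of $x$: since $x\in X^{(ks-1)}$ precisely when $\lfloor d(x)/s\rfloor\le k-1$, this is equivalent to the asserted $k$-cover conditions by Proposition~\ref{n-cover}. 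I would also strengthen the inductive hypothesis to record that each set of the cover deformation retracts onto an at most $r$-dimensional subcomplex contained in it.

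For the inductive step put $Y=X^{((N-1)s-1)}$, so $\dim Y\le(N-1)s-1$, and $R=X\setminus Y$. The arithmetic that drives the proof is that the dual complex $L$ of $Y$ in $\beta X$ (the union of the simplices of $\beta X$ disjoint from $Y$) has dimension at most $\dim X-((N-1)s-1)-1\le s-1=r$; since every simplex of $L$ has dimension $\le r$, $L$ is contained in the $r$-skeleton. Moreover, as in Proposition~\ref{complement}, $R$ deformation retracts onto $L$ along the join intervals of $\beta X$; fix such a retraction $\sigma\colon R\to L$, and decompose $X=N_Y\cup R'$ where $N_Y$ is an open regular neighbourhood of $Y$ with a retraction $\pi\colon N_Y\to Y$ and $R'\subseteq R$ carries $\sigma$, the two meeting in a collar. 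Applying the inductive hypothesis to $Y$ (with the same $m\ge N>N-1$) gives a cover $\{V_1,\dots,V_m\}$ of $Y$ by $r$-deformable sets, each deformation retracting onto an $\le r$-dimensional subcomplex $B_i\subseteq V_i$, with $\Ord_x\ge m-\lfloor d(x)/s\rfloor$ throughout $Y$. One then builds $U_i=\pi^{-1}(V_i)\cup W_i$, where $W_i\subseteq R'$ is obtained by pulling back along $\sigma$ a subset $\Wi W_i\subseteq L$ coming from a multiple cover of $L$ with $\Ord\ge m-N+1$, and then truncating it inside the collar so that, below the mid-level of the collar, $W_i\subseteq\pi^{-1}(V_i)$.

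Granting this, the order estimate is a short case check: on $Y$ and on $R\cap N_Y$ the sets $\pi^{-1}(V_i)$ alone already give order $\ge m-(N-2)$ (because the carrier of $\pi(x)$ in $Y$ has dimension $\le(N-1)s-1$), hence more than $m-(N-1)=m-\lfloor d(x)/s\rfloor$ there; while on $R\setminus N_Y$ the sets $W_i$ give order $\ge m-(N-1)$ by the choice of $\{\Wi W_i\}$. So $\Ord_x\scr U\ge m-\lfloor d(x)/s\rfloor$ everywhere, which is the assertion. The step I expect to be the genuine obstacle is verifying that each $U_i$ is again $r$-deformable. The danger is that the naive union $\pi^{-1}(V_i)\cup\sigma^{-1}(\Wi W_i)$ is a homotopy pushout whose overlap is governed by the links of vertices of $Y$ inside $L$ — pieces that are not contractible — so its homotopy dimension may be as large as $2r+1$. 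One must therefore choose the cover $\{\Wi W_i\}$ of $L$ and the collar-truncations compatibly with the $V_i$, using a fine enough subdivision so that these links are small and each is cut into a contractible piece by the relevant slab of $V_i$, in such a way that the overlap $\pi^{-1}(V_i)\cap W_i$ becomes a deformation retract of $W_i$. Then the pushout collapses, $U_i$ deformation retracts onto $B_i$, an $\le r$-dimensional subcomplex, and the strengthened inductive hypothesis is reinstated.

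Finally, for $r=0$ one takes every elementary constituent of the construction — the $V_i$ by the monotone form of the inductive hypothesis, and the radial ``stars'' out of which $W_i$ and the collar-truncations are assembled — to be monotone $0$-deformable, a star of a vertex being contracted radially so that $d(\cdot)$ stays constant and then drops to $0$. Since a concatenation of monotone deformations is again monotone (each one starts where the previous ended, so $d(H(x,t))$ remains nonincreasing throughout), the resulting deformation of $U_i$ onto the $0$-skeleton is monotone, which completes the inductive step and the proof.
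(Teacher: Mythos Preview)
Your proposal has a genuine gap precisely where you flag it: the $r$-deformability of $U_i=\pi^{-1}(V_i)\cup W_i$. The repair you sketch --- refine until the links are small and ``cut into a contractible piece by the relevant slab of $V_i$'' so that the overlap becomes a deformation retract of $W_i$ --- is not carried out and does not work as stated. The $V_i$ are handed to you by the inductive hypothesis on $Y$; you have no control over how they meet links of top-block simplices, and you cannot refine the triangulation after the fact without rebuilding the $V_i$ from scratch. Already for $r=0$, $N=2$, $m=2$ with $X$ the theta graph, the inductive cover of $Y=X^{(0)}$ is forced to be $V_1=V_2=X^{(0)}$, so $\pi^{-1}(V_i)=N_Y$; by pigeonhole some $\Wi W_i$ contains two of the three edge-midpoints, and your symmetric collar-gluing then makes $U_i=N_Y\cup W_i$ contain a cycle, hence not $0$-deformable. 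One can rescue this toy case by ad hoc asymmetric truncations, but your mechanism does not do it, and in higher dimensions the obstruction is exactly the $(2r+1)$-dimensional pushout you yourself identify.

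The paper avoids this by organizing the induction differently: a double induction, first on $N$ and then an inner induction on $m\ge N$. At each inner step one adds a \emph{single} new set $U_m$ rather than extending all the old ones across the top block. The new set is assembled as $W_0\cup W_1\cup\dots\cup W_{N-1}$ where the $W_j$ have pairwise \emph{disjoint closures}: each $W_j$ is a small open thickening of the deficiency locus $Z_j=\{x\in X^{((N-j)(r+1)-1)}:\Ord_x\{U_1,\dots,U_{m-1}\}\ \text{is minimal on that skeleton}\}$. The order hypothesis already in force guarantees that $Z_j$ misses the next relevant skeleton down, so $Z_j$ sits in a complement that retracts (via Proposition~\ref{complement}) to an $r$-dimensional complex; hence each $W_j$ is $r$-deformable, and a disjoint union of $r$-deformable sets is trivially $r$-deformable. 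That disjointness, purchased by building one set at a time, is the idea your argument is missing.
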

\begin{proof}
	It suffices to prove the Proposition for complexes with $\dim X=N(r+1)-1$.
	We do it by induction on $n$. For $N=1$ the statement is obvious.
	Suppose that it holds true for $N-1\ge 1$. We prove it for $N$ by induction on $m$.
	First we establish the base of induction by proving the statement for $m=N$. 
	By the external induction applied to $X^{((N-1)(r+1)-1)}$ with $m=N-1$
	there is an open cover  $\scr U=\{U_1,\dots, U_{N-1}\}$ of $X^{((N-1)(r+1)-1)}$ 
	such that each $U_i$ is $r$-deformable and $\Ord_x\scr U\ge (N-1)-k+1=N-k$ for all $x\in X^{(k(r+1)-1)}$. We can enlarge each  $U_i$ to  a $r$-deformable open in $X$ set $U_i'\subset X$.
	
	Let $G=\bigcup_{i=1}^{N-1}U'_i$.
	Since the complement $X\setminus X^{((N-1)(r+1)-1)}$ is homotopy equivalent  to a $r$-dimensional complex (see Proposition~\ref{complement}),
	$Z_0=X\setminus G$ is $r$-deformable. Since $Z_0$ is closed, we can find an open enlargement $W_0$ 
	to an $r$-deformable set whose closure does not intersect $X^{((N-1)(r+1)-1)}$ .
	Thus, the cover $\{U_1',\dots, U_{N-1}',W_0\}$ satisfies the condition of Proposition for $k=N$.
	
	Consider the set $$Z_1=\{x\in X^{(N-1)(r+1)-1)}\mid \Ord_x\scr U= 1\}.$$ Clearly, $Z_1$ is closed. 
	By the induction assumption $Z_1$ does not intersect the
	skeleton $X^{((N-2)(r+1)-1)}$. Since the complement, 
	$$
	X^{((N-1)(r+1)-1)}\setminus X^{((N-2)(r+1)-1)}
	$$
	is homotopy equivalent to an $r$-dimensional complex,
	$Z_1$ is $r$-deformable in $X^{((N-1)(r+1)-1)}$. 
	Let $W_1$ be an enlargement of $Z_1$ to an open $r$-deformable  in $X$ sets such that the closure $\bar W_1$ does not intersect
	$\bar W_0\cup X^{(N-2)(r+1)-1)}$. Note that the cover $\{U_1',\dots, U_{N-1}',W_0\cup W_1\}$ satisfies the
	condition of Proposition with $k=N$ and $k=N-1$.
	
	 Next we consider $$Z_2=\{x\in X^{((N-2)(r+1)-1)}\mid \Ord_x\scr U= 2\}$$ and similarly 
	 define an open set $W_2$ and so on up to $W_{N-1}$. By the construction
	 each set $W_i$ is $r$-deformable and the closures $\bar W_i$ are disjoint. Therefore,
	 the union $U_N'=W_0\cup\dots\cup W_{N-1}$ is $r$-contractible.
	 Then the cover $U_0',\dots, U_N'$ satisfies all the conditions of Proposition for all $k\le N$.
	
	The proof of the inductive step is very similar to the above.
	Assume that the statement of Proposition holds for $N$ and $m-1\ge N$. We prove it for $N$ and $m$. Let $\scr U=\{U_1,\dots, U_{m-1}\}$ be an open cover of $X$ by $r$-deformable sets such that for any $k\le N$ the restriction of $\scr U$ to $X^{(k(r+1)-1)}$ is a $k$-cover. Thus, $\Ord_x\scr U\ge (m-1)-N+1=m-N$ for all $x$. Let
	$$Z_0=\{x\in X\mid\Ord_x\scr U=m-N\}.$$ By the induction assumption $Z_0\cap X^{((N-1)(r+1)-1)}=\emptyset$. Thus,
	$Z_0$ is $r$-deformable in $X$. 
	We consider an open $r$-deformable neighborhood $W_0$ of $Z_0$ with $\bar W_0\cap X^{(N-1)(r+1)-1}=\emptyset$.
	
	Next we consider the closed set $$Z_1=\{x\in X^{((N-1)(r+1)-1)}\mid \Ord_x\scr U=m-N+1\}.$$ By the induction assumption $Z_1$ does not intersect $X^{((N-2)(r+1)-1)}$. As above, we define
	a $r$-deformable set $W_1$  with $$\bar W_1\cap (\bar W_0\cup X^{((N-2)(r+1)-1)})=\emptyset$$ and so on. We define $U_m=W_0\cup\dots\cup W_{N-1}$.
	Then the condition of Proposition is satisfied for all $k$ with $\scr U'=\{ U_1,\dots, U_{m-1},U_m\}$.
	
	Now we revise our proof for $r=0$ in order to verify the extra condition of Proposition. Note that $\dim X\le N-1$ in this case.
In the proof of the base of induction on $m$ the enlargements $U_i'$ can be chosen monotone deformable to $U_i$.
	Hence, each $U_i'$ is monotone 0-deformable. Since $W_0$ lives in the complement to the $(N-2)$-skeleton, it is monotone 0-deformable. The set $W_1$ can be chosen monotone deformable to
	the monotone 0-deformable set $W_1\cap X^{(N-2)}\subset X^{(N-2)}\setminus X^{(N-3)}$. Thus, $W_1$ is monotone 0-deformable and so on. As the result we obtain that the set $U'_N=W_0\cup\dots\cup W_{N-1}$ is monotone 0-deformable. In the proof of inductive step the same argument shows that the set
	$U_m=W_0\cup\dots\cup W_{N-1}$ is monotone 0-deformable.
\end{proof}

\subsection{Borel  construction}
Let a group $\pi$ act on spaces $X$ and $E$ with the projections
onto the orbit spaces $q_X:X\to X/\pi$ and $q_E:E\to E/\pi=B$. Let
$q_{X\times E}:X\times E\to X\times_\pi E=(X\times E)/\pi$ denote the
projection onto the orbit space of the diagonal action of $\pi$ on
$X\times E$. Then there is a commutative diagram called the {\em
Borel construction}~\cite{Bo}:
\[
\begin{CD}
X @<pr_X<< X\times E @>pr_2>> E\\
@Vq_XVV @VqVV @Vq_EVV\\
X/\pi @<p_E<< X\times_{\pi} E @>p_X>> B.\\
\end{CD}
\]

If $\pi$ is discrete and the actions are free and proper, then all projections
in the diagram are locally trivial bundles with the structure group $\pi$.  Then the fiber of $p_X$ is homeomorphic to $X$ and the fiber of $p_E$ is homeomorphic to $E$.
For any invariant subset $Q\subset X$ the map $p_X$ defines the pair of bundles
$p_X:(X\times_{\pi}E,Q\times_{\pi}E)\to B$ with the stratified fiber $(X,Q)$
and the structure group $\pi$. 

If $X/\pi$ and $B$ are CW complexes for proper free actions of discrete group $\pi$, their CW structures  define a natural
CW structure on $X\times_{\pi}E$ as follows: First, $X$ and $E$ being covering spaces inherit CW structures from $X/\pi$ and $B$ respectively. Since the diagonal action of $\pi$ on $X\times E$
preserves the product CW complex structure on $X\times E$ and takes cells to cells homeomorphically, the orbit space $X\times_{\pi}E$ receives the induced CW complex structure.

\begin{lemma}\label{cat}
Let $\Wi X$ be the universal covering of an $n$-dimensional simplicial complex $X$ with the fundamental group $\pi=\pi_1(X)$. Suppose that the universal covering admits a classifying map $u:X\to B$ to a $d$-dimensional simplicial complex, $\pi_1(B)=\pi$. Let $E$ be the universal covering of $B$.
Then for the $n$-skeleton $$\cat(\Wi X\times_{\pi}E)^{(n)}\le\frac{d+n}{2}$$
where the CW complex structure on $\Wi X\times_{\pi}E$ is defined by the simplicial complex structures on $X$ and $B$.
\end{lemma}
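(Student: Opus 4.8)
The plan is to run the Kolmogorov--Ostrand multiple cover of Proposition~\ref{cover} on the $d$-dimensional base $B$, pull the resulting cover back to $Y:=\widetilde X\times_\pi E$ along the Borel-construction projection $p_X:Y\to B$, and then split each pulled-back set in the fibre direction by a Grossmann--Whitehead argument, using that the fibre of $p_X$ is the simply connected complex $\widetilde X$. Concretely, I first apply Proposition~\ref{cover} to $B$ with $r=0$ and $N=d+1$: for $m$ as large as I like I obtain an open cover $\mathcal U=\{U_1,\dots,U_m\}$ of $B$ by monotone $0$-deformable sets with $\Ord_x\mathcal U\ge m-k+1$ for all $x\in B^{(k-1)}$, $k\le d+1$. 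Pulling back, $V_i:=p_X^{-1}(U_i)\cap Y^{(n)}$ is an open cover of $Y^{(n)}$, and a point of $Y^{(n)}$ lying over a $j$-cell of $B$ belongs to at least $m-j$ of the $V_i$.

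Next I exploit the monotone deformation. Since $p_X$ is a fibre bundle, the monotone deformation of $U_i$ onto the $0$-skeleton of $B$ lifts to a deformation of $V_i$ in $Y$ onto a subset of a disjoint union of fibres $p_X^{-1}(b)\cong\widetilde X$; because the deformation downstairs only lowers the dimension of the $B$-cell, a point of $Y^{(n)}$ whose cell is $\widetilde e\times\widetilde f$ is carried into a cell of $\widetilde X$ of dimension at most $\dim\widetilde e$. As $\widetilde X$ is simply connected of dimension $n$, the Grossmann--Whitehead construction (pairing consecutive skeleta and using Proposition~\ref{complement} to see that the region of $\widetilde X$ between $\widetilde X^{(2t-1)}$ and $\widetilde X^{(2t+1)}$ retracts to a graph, which is null in $\widetilde X$) writes each fibre $\widetilde X$ as a union of $\lceil (n+1)/2\rceil$ open sets contractible in $\widetilde X$, hence contractible in $Y$, and a point lying in the $\le\delta$-skeleton meets only the first $\lceil\delta/2\rceil+1$ of them. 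Pulling these pieces back through the lifted deformation splits $V_i$ into sets $V_{i,a}$, each contractible in $Y$, so that a point of $Y^{(n)}$ with cell $\widetilde e\times\widetilde f$, $\dim\widetilde f=j$, $\dim\widetilde e=\delta$, lies in $V_{i,a}$ only for $a\le\lceil\delta/2\rceil$ and for at least $m-j$ values of $i$.

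Finally I reassemble, and here the two stratifications interlock. Over a $j$-cell of $B$ the fibre part available in $Y^{(n)}$ has dimension at most $n-j$, so it needs only about $\lceil (n-j)/2\rceil$ Grossmann--Whitehead layers, while the pulled-back cover is a $(j+1)$-cover there, i.e. carries $m-j$ sets of slack. Assigning to a cell $\widetilde e\times\widetilde f$ the composite index $\dim\widetilde f+\lfloor\dim\widetilde e/2\rfloor$, which ranges only over $0,\dots,\lfloor (d+n)/2\rfloor$, one groups the sets $V_{i,a}$ — using the monotone multiple-cover machinery to keep all sets open and the members of a group pairwise disjoint, so that the union of a group is contractible in $Y$ since $Y$ is path connected — into $\lfloor(d+n)/2\rfloor+1$ open sets covering $Y^{(n)}$, each contractible in $Y$. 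This gives $\cat(\widetilde X\times_\pi E)^{(n)}\le (d+n)/2$.

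I expect the genuine obstacle to be this last step: turning the combinatorial interleaving of the two multiplicities into an honest open cover by $\lfloor(d+n)/2\rfloor+1$ sets contractible in $Y$. As in the proof of Proposition~\ref{cover} itself, this will require a careful double induction (on $d$ and on the number $m$ of sets) together with the monotone deformations to produce disjoint open families subordinate to the contractible fibre pieces. The point-set bookkeeping that lets the slack $m-j$ over the $j$-stratum of $B$ absorb exactly the $\lceil(n-j)/2\rceil$ fibre layers sitting there, uniformly in $j$, is precisely where the improvement from the old bound $\cd(\pi_1)+\dim X/2$ to $(\cd(\pi_1)+\dim X)/2$ is extracted, and where I anticipate the argument to be most delicate.
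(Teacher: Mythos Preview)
Your overall strategy matches the paper's: apply Proposition~\ref{cover} to $B$ with $r=0$, lift the monotone deformations to push pieces into the fibres $\widetilde X$, and exploit simple connectivity there. But the final reassembly step --- grouping the doubly indexed family $\{V_{i,a}\}$ into $\lfloor(d+n)/2\rfloor+1$ sets each contractible in $Y$ --- is a genuine gap: you do not carry it out, and manufacturing pairwise disjoint open subfamilies from overlapping pieces indexed by two independent parameters is exactly the kind of bookkeeping that tends not to close up without a further idea. The paper avoids this step entirely.

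The missing idea is to apply Proposition~\ref{cover} a \emph{second} time, now to $X$ with $r=1$ and the \emph{same} number $m=\lfloor(d+n)/2\rfloor+1$ of sets. This yields an open cover $\mathcal V=\{V_1,\dots,V_m\}$ of $X$ by $1$-deformable sets whose restriction to $X^{(2j-1)}$ is a $j$-cover for each $j$. One then simply pairs by index: $W_i=p_E^{-1}(V_i)\cap p_{\widetilde X}^{-1}(U_i)$, $i=1,\dots,m$. No regrouping, no double induction. Over a $j$-cell of $B$ at least $m-j$ of the $U_i$ hit, and the \emph{matching} $m-j$ sets $V_i$ already form a cover of $X^{(2(m-j)-1)}\supset X^{(n-j)}$, so one of those $W_i$ catches the point. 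Each $W_i\cap K^{(n)}$ is then contractible in $K^{(n)}$ via the lifted monotone deformation (as you describe) followed by the $1$-deformation of $V_i$ lifted to the simply connected fibre. In short, Proposition~\ref{cover} with $r=1$ already packages the Grossmann--Whitehead layers into $m$ sets with precisely the multiplicities needed to match the cover of $B$ index by index; the delicate interleaving you anticipate is replaced by a single intersection.
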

\begin{proof}
	Denote by $K=\Wi X\times_{\pi}E$. Since
	$(\Wi X\times E)^{(n)}=\bigcup_j\Wi X^{(n-j)}\times E^{(j)}$, we have
	$$K^{(n)}=\bigcup_{j=0}^d\Wi X^{(n-j)}\times_{\pi}E^{(j)}.$$
	We show that $\cat K^{(n)}\le d+\lfloor\frac{n-d}{2}\rfloor=\lfloor\frac{d+n}{2}\rfloor$.

Let  
$m=\lfloor(d+n)/2\rfloor+1$. We apply Proposition~\ref{cover} to $B$ with $r=0$ to obtain an open
cover $\scr U=\{U_1,\dots,U_m\}$ by monotone 0-deformable in $B$ sets with
$\Ord_x\scr U\ge m-j$ for $x\in B^{(j)}$. We note that we apply Proposition~\ref{cover} here
with $r=0$ and $N=d+1$. Thus, we need to be sure that  $m\ge d+1$ which is satisfied since $d\le n$. The substitution
$i=k-1$ helps to see the inequality $\Ord_x\scr U\ge m-i$ for $x\in B^{(j)}$ for $x\in B^{(j)}$.

Since $m>\frac{d+n}{2}$, we have $2m-1>d+n-1$ and hence, $2m-1\ge n=\dim X$. Hence we can apply Proposition~\ref{cover} with $N=m$ and $r=1$ to get an open cover $\scr V=\{V_1,\dots V_m\}$ of $X$ by  1-deformable in $X$ sets such that the restriction of $\scr V$ to $X^{(2j-1)}$ is a $j$-cover,
$j=1,\dots,k$, where $k$ be the smallest integer satisfying the inequality $n\le 2k-1$.

For every $i\le m$ we define  $$W_i=p_E^{-1}(V_i)\cap p_{\tilde X}^{-1}(U_i).$$
We claim that the collection of sets $\{W_1,\dots, W_m\}$ covers $K^{(n)}$.  Let $x\in\Wi X^{(n-j)}\times_{\pi}E^{(j)}$. Then the point $p_{\tilde X}(x)\in B^{(j)}$ is covered
by at least $m-j$ sets $U_{k_1},\dots,U_{k_{m-j}}\in\scr U$. Since $\scr V$ restricted to
$X^{(2(m-j)-1)}$ is a $(m-j)$-cover,
the sets $V_{k_1},\dots,V_{k_{m-j}}$ cover $X^{(2(m-j)-1)}$. 
Note that $2(m-j)-1\ge d+n+2-2j-1\ge n-j$.
Therefore, the point $p_E(x)\in X^{(n-j)}$ is covered by  $V_{k_s}$
for some $s\in\{1,\dots,m-j\}$. Hence, $x\in W_{k_s}$.

We note that $W_i=Q_i\times_{\pi}P_i\subset\Wi X\times_{\pi}E$ where
$P_i=q_B^{-1}(U_i)$ and $Q_i=q_X^{-1}(V_i)$. Thus, its intersection with $K^{(n)}$ can be written as
$$W_i(n)=W_i\cap K^{(n)}=\bigcup_j^dQ_i(n-j)\times_{\pi}P_i(j)$$ where $P_i(k)=P_i\cap E^{(k)}$
and $Q_i(\ell)=Q_i\cap\Wi X^{(\ell)}$.

To complete the proof we show that each set $W_i(n)$ is contractible in $K^{(n)}$.
We consider a monotone deformation $h_t:U_i\to B$ of $U_i$ to $B^{(0)}$. Let $\tilde h_t:P_i\to E$ be the lifting of $h_t$. Thus, $\tilde h_t$ is a $\pi$-equivariant deformation of $P_i$ to $E^{(0)}$.
Then $1_{\Wi X}\times h_t:\Wi X\times P_i\to \Wi X\times E$ is a $\pi$-equivariant deformation and, hence, it defines a deformation of the orbit space
$\bar h_t:\Wi X\times_{\pi}P_i\to K$ which is a lift of $h_t$ with respect to $p_{\Wi X}$. Since each skeleton $\Wi X^{(i)}$ is $\pi$-invariant, the deformation
$\bar h_t$ preserves the filtration of  the fibers $\Wi X$ of the bundle $p_{\Wi X}$
by the skeleta. By the same reason, $\bar h_t$ moves the set $Q_i(n-j)\times_{\pi}P_i$ within $Q_i(n-j)\times_{\pi}B$. Since $h_t$ is monotone,
$\bar h_t$ moves  $Q_i(n-j)\times_{\pi}P^{(j)}$ within $Q_i(n-j)\times_{\pi}B^{(j)}\subset K^{(n)}$
for all $j$. Thus, $\bar h_t$ deformes $W_i(n)$ within $K^{(n)}$ to the set $$Q_i\times_{\pi}E^{(0)}\subset \Wi X\times_{\pi}E^{(0)}=p_{\Wi X}^{-1}(B^{(0)})\cong\coprod_{b\in B^{(0)}}\Wi X.$$

Since $V_i$ is 1-deformable in $X$, so is $Q_i$ in $\Wi X$. Since $\Wi X$ is simply connected,
$Q_i$ is contractible in $\Wi X$. Thus, we obtain that the set
$$Q_i\times_{\pi}E^{(0)}\cong\coprod_{b\in B^{(0)}} Q_i\subset \coprod_{b\in B^{(0)}}\Wi X$$ 
is 0-deformable in $\Wi X\times_{\pi}E^{(0)}\subset K^{(n)}$. Therefore, $W_i(n)$ is 0-deformable in $K^{(n)}$. Since $K$ is connected, $W_i(n)$ is contractible in $K^{(n)}$.

Thus, $\cat K^{(n)}\le m-1=\lfloor\frac{d+n}{2}\rfloor\le\frac{d+n}{2}$.
\end{proof}

\

\section{Main Result}

\begin{thm}\label{main}
	For every simplicial complex $X$ there is the inequality
	$$
	\cat X\le\frac{\cat(u_X)+\dim X}{2}
	$$
	where $u_X:X\to B\pi$ is a classifying map for the universal cover of $X$.
\end{thm}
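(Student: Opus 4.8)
The plan is to deduce the inequality from Lemma~\ref{cat} by first replacing $B\pi$ with a finite-dimensional model and then constructing a section of the Borel bundle $p_E$.

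Put $d=\cat(u_X)$. Since $\cat f\le\cat Y$ for any $f\colon X\to Y$, we have $d\le\cat X\le\dim X=n$, so we may assume $d\le n$. By Proposition~\ref{classifying} the classifying map deforms into the skeleton $B\pi^{(d)}$, so $u_X$ factors up to homotopy through a $d$-dimensional simplicial complex $B$ with $\pi_1(B)=\pi$ and with $u_X\colon X\to B$ a classifying map for the universal cover of $X$ (for $d\ge2$ one can take $B=B\pi^{(d)}$, since $B\pi^{(d)}\hookrightarrow B\pi$ is a $\pi_1$-isomorphism; for $d\le1$ the group $\pi$ is free, resp.\ trivial, and $B$ is a graph, resp.\ a point). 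Let $E\to B$ be the universal covering and $K=\Wi X\times_\pi E$, with the CW structure determined by the simplicial structures of $X$ and $B$. Then Lemma~\ref{cat} applies and gives
$$\cat K^{(n)}\le\frac{d+n}{2}.$$

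The Borel construction makes $p_E\colon K\to\Wi X/\pi=X$ into a locally trivial bundle with fiber $E$. This bundle admits a section even though $E$ need not be contractible: because $\Wi X$ is simply connected, the composition $u_X\circ q_X\colon\Wi X\to B$ lifts through $E\to B$ to a map $\Wi u\colon\Wi X\to E$, and, with basepoints chosen compatibly, $\Wi u$ is $\pi$-equivariant (any two lifts of $u_X\circ q_X\circ\gamma=u_X\circ q_X$ differ by a deck transformation of $E$, which the basepoint condition identifies with $\gamma$ via the $\pi_1$-isomorphism induced by $u_X$). Thus $x\mapsto(x,\Wi u(x))$ is a $\pi$-equivariant section of $\Wi X\times E\to\Wi X$, and it descends to a map $\sigma\colon X\to K$ with $p_E\circ\sigma=\id_X$.

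Finally, as $\dim X=n$, cellular approximation lets us homotope $\sigma$ to a cellular map $\sigma'\colon X\to K$ with $\sigma'(X)\subset K^{(n)}$. Although $\sigma'$ is only a homotopy section, $p_E\circ\sigma'\simeq p_E\circ\sigma=\id_X$, so $p_E|_{K^{(n)}}\colon K^{(n)}\to X$ is a homotopy domination; Proposition~\ref{domination} then yields
$$\cat X\le\cat K^{(n)}\le\frac{d+n}{2}=\frac{\cat(u_X)+\dim X}{2}.$$
The one step needing care is the existence of the equivariant lift $\Wi u$ (equivalently, of the section $\sigma$): a priori an equivariant map $\Wi X\to E$ is obstructed by classes in $H^{k+1}(X;\pi_kE)$ for $d\le k\le n-1$, and what makes all of them vanish here is precisely that $E$ is a \emph{universal} cover, so that the lift exists directly from the simple connectivity of $\Wi X$. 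Everything else is formal, and the low-dimensional cases $d=0,1$ are themselves instances of Lemma~\ref{cat}.
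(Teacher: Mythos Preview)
Your proof is correct and follows the same overall strategy as the paper: apply Lemma~\ref{cat} to $K=\Wi X\times_\pi E$ with $\dim B=d$, show that $p_E|_{K^{(n)}}:K^{(n)}\to X$ is a homotopy domination, and conclude via Proposition~\ref{domination}. The one point of difference is how the section of $p_E$ is obtained. The paper first works in the \emph{full} Borel construction $\Wi X\times_\pi E\pi$, where the fiber $E\pi$ is contractible and hence $p_E$ is a homotopy equivalence with a section $g$; it then lifts the deformation of $u_X$ into $B\pi^{(d)}$ (via the homotopy lifting property of $p_{\Wi X}$) to push $g$ into $\Wi X\times_\pi E\pi^{(d)}$, obtaining a homotopy section there. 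You instead set $B=B\pi^{(d)}$ from the outset and construct a genuine section directly from the $\pi$-equivariant lift $\Wi u:\Wi X\to E$ of $u_X\circ q_X$, which exists by elementary covering-space theory since $\Wi X$ is simply connected. Your route is shorter and avoids the two-stage deformation; the paper's route has the virtue of making the contractibility of $E\pi$ do the work and never needing to verify equivariance by hand. Note that for $d\ge2$ your $E$ coincides with the paper's $E\pi^{(d)}$ (it is the universal cover of $B\pi^{(d)}$), so the spaces $K$ agree; your separate handling of $d\le1$ is also fine.
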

\begin{proof} Let $\dim X=n$ and $\cat(u_X)=d$.
In the proof we use the notations $B=B\pi$, $B^d=B\pi^{(d)}$ and
$E=E\pi$, $E^d=E\pi^{(d)}$. By Proposition~\ref{classifying} we may assume that the map
$u_X$ lands in $B^d$.
Consider the diagram generated by the Borel construction
$$
\begin{CD}
X @<p_E<< \Wi X\times_{\pi}E @>p_{\Wi X}>> B\\
@A=AA @A\subset AA @A\subset AA\\
X @<p_{E^d}<< \Wi X\times_{\pi}E^d @>p_{\Wi X}|>> B^d.\\
\end{CD}
$$	
Since $E$ is contractible, the map $p_E$ is a homotopy equivalence. Let $g$ be its homotopy inverse. Applying the homotopy lifting property we may assume that $g$ is a section of $p_E$.
Then the map $p_{\Wi X}$ is homotopic to $p_{\Wi X}\circ g\circ p_E$. Note that
the map $p_{\Wi X}\circ g:X\to B$ is a classifying map for $X$. Thus, it is homotopic to 
the map $u_X:X\to  B$ whose  image is in $B^d$. Therefore, $p_{\Wi X}:\Wi X\times_{\pi}E\to B$ is homotopic to a map with image in $B^d$.
Let $p_t:\Wi X\times_{\pi}E\to B$ be such a homotopy. Thus, $p_0=p_{\Wi X}$ and $p_1(\Wi X\times_{\pi}E)\subset B^d$. Let $\bar p_t:\Wi X\times_{\pi}E\to\Wi X\times_{\pi}E$ be the lift
of $p_t$ with $\bar p_0=id$. Then $\bar p_1(\Wi X\times_{\pi}E)\subset \Wi X\times_{\pi}E^d$.

First, we note that $s=\bar p_1\circ g:X\to\Wi X\times_{\pi}E^d$ is a homotopy section of $p_{E^d}$.
Indeed, the homotopy $h_t=p_E\circ\bar p_t\circ g:X\to X$ is joining $h_0=p_E\circ\bar p_0\circ g=p_E\circ g=1_X$
with $h_1=p_E\circ\bar p_1\circ g=p_{E^d}\circ\bar p_1\circ g=p_{E^d}\circ s$.

We may assume that $B$ is a simplicial complex. Denote by $K=\Wi X\times_{\pi}E^d $.
We consider the CW complex structure on $K$ defined by the simplicial complex structures on $X$ and $B$. Next we show that the restriction $(p_{E^d})|_{K^{(n)}}:K^{(n)}\to X$ is a homotopy domination.
Since $\dim X=n$, there is a homotopy $s_t:X\to K$ with $s_0=s$ and $s_1(X)\subset K^{(n)}$.
Then the homotopy $q_t=p_{E^d}\circ s_t:X\to X$ joints $q_0=p_{E^d}\circ s\sim 1_X$ with
$q_1=p_{E^d}\circ s_1=(p_{E^d})|_{K^{(n)}}\circ s_1$. 

Therefore, by Proposition~\ref{domination}, $\cat X\le\cat K^{(n)}$. Lemma~\ref{cat}
implies $$\cat X\le \frac{d+n}{2}.$$
\end{proof}
\begin{cor}\label{cor}
	For any CW complex $X$,
	$$
	\cat X\le\frac{\cd(\pi_1(X))+\dim X}{2}.$$
\end{cor}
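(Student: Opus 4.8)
The plan is to derive Corollary~\ref{cor} directly from Theorem~\ref{main} together with the two structural facts already available in the excerpt: Proposition~\ref{classifying}, which identifies $\cat(u_X)$ with the smallest skeletal dimension into which a classifying map can be pushed, and the fact that $\cd(\pi)$ is (up to the usual low-dimensional caveats) the smallest dimension of a $K(\pi,1)$-complex. First I would reduce to the simplicial case: Theorem~\ref{main} is stated for simplicial complexes, so I would invoke the homotopy invariance of $\cat$ (noted in the introduction) and the fact that every CW complex is homotopy equivalent to a simplicial complex of the same dimension, so it suffices to prove the inequality for a simplicial $X$ with $\pi=\pi_1(X)$.

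The core step is the bound $\cat(u_X)\le\cd(\pi)$. Set $d=\cd(\pi)$. By a standard result in group cohomology one may choose the aspherical complex $B\pi$ to have dimension $d$ (when $d\ge 3$ this is Eilenberg--Ganea; when $d\le 2$ one uses that $\cd(\pi)\le 2$ groups admit $2$-dimensional $K(\pi,1)$'s, and the case $d\le 1$ is classical). Since the classifying map $u_X$ can then be taken to land in a $d$-dimensional complex $B\pi=(B\pi)^{(d)}$, Proposition~\ref{classifying} (the "if" direction) gives $\cat(u_X)\le d=\cd(\pi)$. Plugging this into Theorem~\ref{main} yields
$$
\cat X\le\frac{\cat(u_X)+\dim X}{2}\le\frac{\cd(\pi)+\dim X}{2},
$$
which is the claimed inequality.

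The one subtlety — and the step I expect to require the most care — is the Eilenberg--Ganea issue: the equality "smallest dimension of a $K(\pi,1)$ equals $\cd(\pi)$" fails precisely in the potential exceptional range $\cd(\pi)=2$, geometric dimension $3$, and it is a classical open problem whether such groups exist. However, this does not obstruct the argument: even in that hypothetical case one has a $K(\pi,1)$ of dimension $\le\max\{3,\cd(\pi)\}$, and more to the point Proposition~\ref{classifying} only needs $u_X$ deformable into the $\cd(\pi)$-skeleton of \emph{some} $B\pi$, which holds because the obstructions to such a deformation are cohomology classes of $X$ with coefficients in $\pi_k(B\pi)=0$ for $k\ge 2$ — so the relevant obstruction groups vanish above dimension $\cd(\pi)$ regardless of the geometric dimension of $B\pi$. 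Thus one gets $\cat(u_X)\le\cd(\pi)$ cleanly, and the corollary follows. I would present the proof as: "Reduce to $X$ simplicial; by Proposition~\ref{classifying} and the vanishing of the relevant obstructions above dimension $\cd(\pi_1(X))$ we have $\cat(u_X)\le\cd(\pi_1(X))$; now apply Theorem~\ref{main}."
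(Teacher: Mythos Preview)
Your overall strategy --- reduce to simplicial $X$, establish $\cat(u_X)\le\cd(\pi)$, and plug into Theorem~\ref{main} --- is exactly the paper's, and you correctly flag the Eilenberg--Ganea issue at $\cd(\pi)=2$ as the only delicate point. The gap is in how you resolve that case. You assert that the obstructions to compressing $u_X:X\to B\pi$ into $B\pi^{(2)}$ are cohomology classes of $X$ with coefficients in $\pi_k(B\pi)=0$ for $k\ge 2$. That is the wrong coefficient system: lifting $u_X$ through the inclusion $i:B\pi^{(2)}\hookrightarrow B\pi$ is obstructed by classes in $H^{k+1}(X;\pi_k(F))$, where $F$ is the homotopy fibre of $i$. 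Since $B\pi$ is aspherical, the long exact sequence gives $\pi_k(F)\cong\pi_k(B\pi^{(2)})$ for $k\ge 2$, and a $2$-skeleton of an aspherical complex is typically \emph{not} aspherical, so these groups need not vanish. Nor is there any reason for $H^{k+1}(X;\,\cdot\,)$ to vanish once $k+1>\cd(\pi)$: the quantity $\cd(\pi)$ bounds the cohomological dimension of $B\pi$, not of $X$. So the argument as written does not establish the compression.

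The conclusion you want is nonetheless true, and the paper obtains it by running the obstruction argument on $B\pi$ rather than on $X$. Take a $3$-dimensional model for $B\pi$ (available whenever $\cd(\pi)=2$). Extending the identity $B\pi^{(2)}\to B\pi^{(2)}$ over the $3$-cells meets a single obstruction, lying in $H^3\bigl(B\pi;\pi_2(B\pi^{(2)})\bigr)$, and \emph{this} group vanishes because $\cd(\pi)=2$. One gets a map $r:B\pi\to B\pi^{(2)}$ which is the identity on the $1$-skeleton and hence an isomorphism on $\pi_1$; then $r\circ u_X:X\to B\pi^{(2)}$ is a classifying map into a $2$-complex, and Theorem~\ref{main} finishes the job. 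The moral: $\cd(\pi)$ kills cohomology of $B\pi$ in high degrees, so the compression must be carried out there and then transported to $X$ by composition.
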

\begin{proof}
We note that every CW complex is homotopy equivalent to a simplicial complex of the same dimension. By the Eilenberg-Ganea theorem $\pi=\pi_1(X)$ has a classifying complex $B\pi$
of dimension equal $\cd(\pi)$ whenever $\cd(\pi)\ne 2$ (see~\cite{Br}). 
Thus, If $\cd(\pi)\ne 2$, the result immediately follows from Theorem~\ref{main}.

In the case when $\cd(\pi)=2$
one can find a classifying complex $B\pi$ of dimension three~\cite{Br}. Then Obstruction Theory implies that there
is a map $r:B\pi\to B\pi^{(2)}$ which is the identity on the 1-skeleton. It is easy to check that $r$ induces an isomorphism of the fundamental groups:
Obviously it is surjective and the kernel of $r_*:\pi_1(B)\to\pi_1(B\pi^{(2)})$ is trivial. In particular, its composition with a classifying map
$r\circ u_X: X\to B\pi^{(2)}$ is a classifying map and we can apply Theorem~\ref{main} to it.
\end{proof}

\begin{thm} For any locally trivial bundle $p:E\to B$ with a simply connected fiber $F$ and an aspherical base $B$,
$$
\cat E\le \dim B+\frac{\dim F}{2}.$$
\end{thm}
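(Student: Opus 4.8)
The plan is to derive this as a direct corollary of \theoref{main}. First I would identify the classifying map of the total space. Since $F$ is simply connected, in particular connected, the homotopy exact sequence of the bundle $p\colon E\to B$ reads $\pi_1(F)\to\pi_1(E)\to\pi_1(B)\to\pi_0(F)$ with $\pi_1(F)=0$ and $\pi_0(F)$ a single point, so $p_*\colon\pi_1(E)\to\pi_1(B)$ is an isomorphism. Because $B$ is aspherical, it is already a model for $B\pi$ with $\pi=\pi_1(E)$, and then $p$ itself is a classifying map $u_E\colon E\to B\pi=B$ for the universal covering of $E$.

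Next I would bound $\cat(u_E)$. Since the LS-category of a map never exceeds that of its target, $\cat(u_E)\le\cat(B\pi)=\cat(B)\le\dim B$, the last step being the elementary estimate $\cat X\le\dim X$ valid for every CW complex $X$. (Equivalently, \propref{classifying} applies verbatim: $B\pi=B$ is $(\dim B)$-dimensional, so $u_E$ trivially lands in $B\pi^{(\dim B)}$, hence $\cat(u_E)\le\dim B$.)

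Then I would record the dimension count. A locally trivial bundle with CW base and CW fiber is locally over $B$ a product $U\times F$; attaching such products cell by cell over the skeleta of $B$, using that the bundle is trivial over each (contractible) cell, exhibits $E$ up to homotopy as a complex of dimension at most $\dim B+\dim F$, which we may take to be simplicial as in the proof of \corref{cor}. Since $\cat$ is a homotopy invariant, \theoref{main} now gives
$$\cat E\le\frac{\cat(u_E)+\dim E}{2}\le\frac{\dim B+(\dim B+\dim F)}{2}=\dim B+\frac{\dim F}{2}.$$
The only points that require any care — and the closest thing to an obstacle — are replacing $E$ up to homotopy by a simplicial complex of dimension $\dim B+\dim F$ so that \theoref{main} is literally applicable, and observing that the aspherical base $B$ may be used as its own classifying space; both are entirely routine.
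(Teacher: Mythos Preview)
Your argument is correct and follows essentially the same route as the paper: identify $\pi_1(E)\cong\pi_1(B)$ via the homotopy exact sequence, use that $B$ is aspherical so that the fundamental-group contribution is bounded by $\dim B$, and plug $\dim E=\dim B+\dim F$ into the main inequality. The only cosmetic difference is that the paper invokes \corref{cor} (bounding $\cd(\pi_1(E))\le\dim B$) whereas you invoke \theoref{main} directly (bounding $\cat(u_E)\le\dim B$ by taking $p$ itself as the classifying map); your version is marginally more direct since it sidesteps the Eilenberg--Ganea step hidden in \corref{cor}.
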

\begin{proof} By Corollary~\ref{cor}
$$
\cat E\le\frac{cd(\pi_1(E))+\dim E}{2}=\frac{cd(\pi_1(B))+\dim B+\dim F}{2}\le$$
$$\le\frac{2\dim B+\dim F}{2}=\dim B+\frac{\dim F}{2}.$$
\end{proof}
When $B$ is an aspherical manifold we obtain an upper bound $$\cat E\le\cat B+\frac{\dim F}{2}.$$ Therefore for every aspherical $n$-manifold $M$
the LS-category of the total manifold of an $S^3$-fibration $f:N\to M$ is at most $n+1$. All other known result produces  the bound $n+2$ just
in view of the fact that $N$ is inessential. A concrete example would be the total space $N$ of the pull-back of the Hopf bundle $h:S^7\to S^4$ via
an essential map of a 4-torus $g:T^4\to S^4$. I don't see how to get our estimate $\cat N\le 5$ by any other means.

In the case when additionally $\cat F=\frac{\dim F}{2}$, like for $F=\C P^n$,  we have a Hurewicz type formula for $\cat$:
$$
\cat E\le\cat B+\cat F.
$$
We recall that for general fibrations the Hurewicz type formula does not hold. The best known estimate for general locally trivial bundles is
$\cat E\le(\cat B+1)(\cat F+1)-1$ \cite{CLOT}. Note that fibrations with the fiber $\C P^n$ can be produced by projectivization of the spherical bundles of complex vector bundles.

\subsection{r-connected universal cover} We recall a classical result that for $r$-connected space $n$-dimensional complex $X$,
$$
\cat X\le\frac{n}{r+1}.
$$
If $X=B\times Y$ with $r$-connected $Y$, we have $$\cat X\le\cat B+\frac{\dim Y}{r+1}=
\cat B+\frac{n-\dim B}{r+1}$$
$$\le\cat B+\frac{n-\cat B}{r+1}=\frac{r\cat B+n}{r+1}.$$
Below we obtain a similar estimate for general $X$.

In the proof of the main result we applied our technical proposition (Proposition~\ref{cover})
with $r=0$ and $r=1$. Using Proposition~\ref{cover} with $r=0$ and arbitrary $r>0$ brings the
following

\begin{lemma}\label{cat2}
	Suppose that $\Wi X$  the universal covering of an $n$-dimensional simplicial complex $X$ with the fundamental group $\pi=\pi_1(X)$ is $r$-connected. Assume that $\Wi X$ admits a classifying map to $d$-dimensional complex $B$, $\pi_1(B)=\pi$. Let $E$ be the universal covering of $B$.
	Then $$\cat(\Wi X\times_{\pi}E)^{(n)}\le\frac{rd+n}{r+1}.$$
\end{lemma}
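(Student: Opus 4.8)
The plan is to mimic the proof of Lemma~\ref{cat}, replacing the exponent $2 = 1+1$ coming from the $1$-deformability of the sets $V_i$ on $X$ with the exponent $r+1$ coming from $r$-deformability, which is available now because $\Wi X$ is $r$-connected. Concretely, write $K = \Wi X\times_\pi E$ and note as before that $K^{(n)} = \bigcup_{j=0}^d \Wi X^{(n-j)}\times_\pi E^{(j)}$. Set $m = \lfloor (rd+n)/(r+1)\rfloor + 1$, so that $(r+1)(m-1) > rd+n-1$, i.e. $(r+1)m - r \ge rd+n$, and in particular (using $d\le n$, hence $rd\le rn$, hence $m \ge \lfloor (rd+n)/(r+1)\rfloor + 1 \ge \lfloor n/1 \rfloor$-type bounds) one checks $m \ge d+1$, which is what is needed to invoke Proposition~\ref{cover} on $B$. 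First I would apply Proposition~\ref{cover} to $B$ with $r=0$ and $N = d+1$ to get a monotone $0$-deformable open cover $\scr U = \{U_1,\dots,U_m\}$ of $B$ with $\Ord_x\scr U \ge m-j$ for $x\in B^{(j)}$.

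Next I would apply Proposition~\ref{cover} to $X$, this time with the given $r$ and $N = m$: since $\dim X = n \le (r+1)m - 1$ (which follows from $(r+1)m - r \ge rd+n \ge n$, so $(r+1)m \ge n+r \ge n+1$), we obtain an open cover $\scr V = \{V_1,\dots,V_m\}$ of $X$ by $r$-deformable sets whose restriction to $X^{((r+1)\ell - 1)}$ is an $\ell$-cover for each $\ell$ with $(r+1)\ell - 1 \le n$. Then, exactly as in Lemma~\ref{cat}, set $W_i = p_E^{-1}(V_i)\cap p_{\Wi X}^{-1}(U_i)$. To see that $\{W_1,\dots,W_m\}$ covers $K^{(n)}$: a point $x \in \Wi X^{(n-j)}\times_\pi E^{(j)}$ has $p_{\Wi X}(x) \in B^{(j)}$ covered by at least $m-j$ of the $U$'s, say $U_{k_1},\dots,U_{k_{m-j}}$; since $\scr V$ restricted to $X^{((r+1)(m-j)-1)}$ is an $(m-j)$-cover, the corresponding $V_{k_s}$ cover $X^{((r+1)(m-j)-1)}$, and the key inequality $(r+1)(m-j) - 1 \ge n - j$ — equivalent to $(r+1)m - 1 \ge n + rj$, which holds since $(r+1)m \ge n + r \ge n + rj$ for $j\le 1$, and for larger $j$ one uses that $p_{\Wi X}(x)\in B^{(j)}$ with $j\le d$ together with $(r+1)m - r \ge rd + n \ge rj + n$ — guarantees $p_E(x)\in X^{(n-j)}$ is covered by some $V_{k_s}$, so $x\in W_{k_s}$.

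Finally I would run the same contractibility argument as in Lemma~\ref{cat}: each $W_i(n) = W_i\cap K^{(n)} = \bigcup_{j=0}^d Q_i(n-j)\times_\pi P_i(j)$ with $P_i = q_B^{-1}(U_i)$, $Q_i = q_X^{-1}(V_i)$; lifting the monotone deformation of $U_i$ to $B^{(0)}$ gives a $\pi$-equivariant deformation of $P_i$ to $E^{(0)}$, hence a deformation $\bar h_t$ of $K$ lifting it, which preserves the skeletal filtration of the $\Wi X$-fibers and, by monotonicity, moves $Q_i(n-j)\times_\pi P_i^{(j)}$ within $Q_i(n-j)\times_\pi B^{(j)}\subset K^{(n)}$. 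This deforms $W_i(n)$ within $K^{(n)}$ to $Q_i\times_\pi E^{(0)} \cong \coprod_{b\in B^{(0)}} Q_i$. Since $V_i$ is $r$-deformable in $X$, $Q_i$ is $r$-deformable in $\Wi X$, and because $\Wi X$ is $r$-connected, an $r$-deformable set in $\Wi X$ is contractible in $\Wi X$ (deform into $\Wi X^{(r)}$, then use that the inclusion $\Wi X^{(r)}\hookrightarrow \Wi X$ is null-homotopic on the image by $r$-connectivity — more carefully, an $r$-dimensional subcomplex of an $r$-connected complex is contractible in it). Hence $W_i(n)$ is contractible in $K^{(n)}$, and $\cat K^{(n)} \le m - 1 = \lfloor (rd+n)/(r+1)\rfloor \le (rd+n)/(r+1)$.

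The main obstacle I anticipate is bookkeeping the arithmetic of the two chained applications of Proposition~\ref{cover}: one must verify simultaneously that $m$ is large enough to apply the proposition on $B$ (need $m\ge d+1$), that $n \le (r+1)m - 1$ so the proposition applies on $X$, and that the covering inequality $(r+1)(m-j) - 1 \ge n-j$ holds for all relevant $j\le d$ — the last of these is the delicate one and is exactly where the hypothesis $d\le n$ (or more precisely that $p_{\Wi X}(x)$ lands in the $d$-skeleton) and the $r$-connectivity of $\Wi X$ both get used. The contractibility step is essentially identical to Lemma~\ref{cat} once one observes that $r$-connectivity upgrades "$r$-deformable in $\Wi X$" to "contractible in $\Wi X$," just as simple-connectivity did there for $r=1$.
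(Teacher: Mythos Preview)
Your approach is exactly what the paper intends: it does not give a standalone proof of Lemma~\ref{cat2} but simply says to rerun the proof of Lemma~\ref{cat}, applying Proposition~\ref{cover} to $X$ with the given $r$ in place of $r=1$, and your writeup carries this out. One small arithmetic slip to fix: the claim $(r+1)(m-1) > rd+n-1$ is not true in general (e.g.\ $r=2$, $d=1$, $n=3$ gives $m=2$ and $3\not>4$); the correct and sufficient bound is $(r+1)m \ge rd+n+1$, which follows directly from $m > (rd+n)/(r+1)$ and integrality, and this already yields $(r+1)m - 1 \ge n + rj$ for every $j\le d$ without any case split.
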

This  Lemma brings the following generalization of Theorem~\ref{main}.
\begin{thm}\label{main2}
	For every simplicial complex $X$  with $r$-connected universal cover $\Wi X$ there is the inequality
	$$
	\cat X\le\frac{r\cat(u_X)+\dim X}{r+1}
	$$
	where $u_X:X\to B\pi$ is a classifying map for the universal cover of $X$.
\end{thm}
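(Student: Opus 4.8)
The plan is to reproduce the proof of Theorem~\ref{main} essentially word for word, the single change being that at the last step one invokes Lemma~\ref{cat2} in place of Lemma~\ref{cat}; the extra hypothesis that $\Wi X$ is $r$-connected is precisely what makes Lemma~\ref{cat2} applicable, and it is used nowhere else. Concretely, set $n=\dim X$ and $d=\cat(u_X)$. By Proposition~\ref{classifying} we may assume the classifying map $u_X$ has image in the skeleton $B^d=B\pi^{(d)}$, and (up to homotopy, which changes neither $\cat X$ nor $\dim X$) that $B\pi$ is a simplicial complex. Form the Borel construction for the diagonal $\pi$-action on $\Wi X\times E$, $E=E\pi$, together with its restriction over $B^d$, obtaining the commutative square relating $p_E$, $p_{\Wi X}$ and the restrictions $p_{E^d}$, $p_{\Wi X}|$, with $K:=\Wi X\times_\pi E^d$ carrying the CW structure induced by the simplicial structures on $X$ and $B$.

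The homotopy-theoretic core is identical to Theorem~\ref{main}: since $E$ is contractible, $p_E:\Wi X\times_\pi E\to X$ is a homotopy equivalence, and by the homotopy lifting property we may take its homotopy inverse $g$ to be an honest section. The composite $p_{\Wi X}\circ g:X\to B$ induces an isomorphism on $\pi_1$, hence is a classifying map, hence is homotopic to $u_X$ and in particular to a map with image in $B^d$. Lifting such a homotopy through the bundle $p_{\Wi X}$, starting from the identity, produces $\bar p_1$ with $\bar p_1(\Wi X\times_\pi E)\subset K$; one checks as before that $s=\bar p_1\circ g:X\to K$ is a homotopy section of $p_{E^d}$, via the homotopy $h_t=p_E\circ\bar p_t\circ g$. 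Because $\dim X=n$, the section $s$ may be deformed into the $n$-skeleton $K^{(n)}$, which shows that $(p_{E^d})|_{K^{(n)}}:K^{(n)}\to X$ is a homotopy domination. By Proposition~\ref{domination} we get $\cat X\le\cat K^{(n)}$, and since $\Wi X$ is $r$-connected, Lemma~\ref{cat2} yields
\[
\cat X\le\cat K^{(n)}\le\frac{rd+n}{r+1}=\frac{r\cat(u_X)+\dim X}{r+1}.
\]

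Thus the theorem is a formal consequence of the Borel-construction setup, the domination principle, and Lemma~\ref{cat2}; there is no real obstacle at the level of the theorem itself. The genuine work sits in Lemma~\ref{cat2}, which is the step I expect to be delicate: its proof should run the argument of Lemma~\ref{cat} but with the two applications of Proposition~\ref{cover} taken with parameter $0$ on the base $B$ (producing monotone $0$-deformable sets $U_i$) and with the given parameter $r>0$ on $X$ (producing $r$-deformable sets $V_i$ whose restriction to $X^{(j(r+1)-1)}$ is a $j$-cover), then forming $W_i=p_E^{-1}(V_i)\cap p_{\Wi X}^{-1}(U_i)$ and tracking, skeleton by skeleton, that each $W_i(n)\cap K^{(n)}$ deforms inside $K^{(n)}$ onto a fiber slice $Q_i\times_\pi E^{(0)}$. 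The point where Theorem~\ref{main}'s proof used simple connectivity of $\Wi X$ to contract $Q_i$ is exactly where $r$-connectivity enters, and the bookkeeping of the index $m=\lfloor(rd+n)/(r+1)\rfloor+1$ (and the inequality $2(m-j)-1\ge n-j$, now replaced by its $r$-analogue) is what needs to be checked carefully.
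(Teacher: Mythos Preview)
Your proposal is correct and matches the paper's approach exactly: the paper does not even write out a separate proof of Theorem~\ref{main2}, it simply states that Lemma~\ref{cat2} ``brings the following generalization of Theorem~\ref{main},'' meaning precisely that one reruns the domination argument of Theorem~\ref{main} verbatim and invokes Lemma~\ref{cat2} at the final step. Your additional remarks about how the proof of Lemma~\ref{cat2} should go (Proposition~\ref{cover} with parameter $0$ on $B$ and parameter $r$ on $X$, with $r$-connectivity of $\Wi X$ replacing simple connectivity in the contraction of $Q_i$) are also in line with what the paper indicates.
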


\begin{cor}
	For any CW complex $X$ with $r$-connected universal covering $\Wi X$,
	$$
	\cat X\le\frac{r\cd(\pi_1(X))+\dim X}{r+1}.$$
\end{cor}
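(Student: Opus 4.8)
The plan is to deduce this from Theorem~\ref{main2} in exactly the way Corollary~\ref{cor} is deduced from Theorem~\ref{main}. First I would replace $X$ by a homotopy equivalent simplicial complex $X'$ of the same dimension. Since $\cat$ is a homotopy invariant, and since a homotopy equivalence $X\to X'$ lifts to a homotopy equivalence $\Wi X\to\Wi{X'}$ of universal covers, the hypothesis that $\Wi X$ is $r$-connected passes to $\Wi{X'}$; hence it suffices to prove the bound for $X'$, which I will again denote $X$.

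Next I would bound $\cat(u_X)$ above by $\cd(\pi)$, where $\pi=\pi_1(X)$. By the Eilenberg--Ganea theorem, whenever $\cd(\pi)\ne 2$ there is an aspherical complex $B\pi$ with $\dim B\pi=\cd(\pi)$; then a classifying map $u_X$ automatically has image in $B\pi^{(\cd(\pi))}$, so Proposition~\ref{classifying} gives $\cat(u_X)\le\cd(\pi)$. Feeding this into Theorem~\ref{main2} and using that the right-hand side is monotone in $\cat(u_X)$ yields $\cat X\le\frac{r\cd(\pi)+\dim X}{r+1}$, as required.

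The only case needing a separate word is $\cd(\pi)=2$, which I would handle exactly as in Corollary~\ref{cor}: choose a $3$-dimensional $B\pi$, use obstruction theory to produce a map $r'\colon B\pi\to B\pi^{(2)}$ that restricts to the identity on the $1$-skeleton and therefore induces an isomorphism on $\pi_1$, and note that $r'\circ u_X\colon X\to B\pi^{(2)}$ is again a classifying map for the universal cover of $X$ with image in a $2$-dimensional complex, so $\cat(r'\circ u_X)\le 2=\cd(\pi)$; now apply Theorem~\ref{main2} to this map.

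I do not expect a genuine obstacle here: all the substance is already contained in Theorem~\ref{main2} (equivalently in Lemma~\ref{cat2}), and the remaining steps are the routine passage to a simplicial model and the Eilenberg--Ganea input. The single point that one must not skip is checking that $r$-connectivity of the universal cover survives the replacement of $X$ by its simplicial model, i.e.\ that it is a homotopy-invariant property of $X$ --- which it is, since homotopy equivalences lift to homotopy equivalences of universal covers.
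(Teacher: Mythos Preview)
Your proposal is correct and is exactly the argument the paper intends: the corollary is stated without proof precisely because it follows from Theorem~\ref{main2} by the same Eilenberg--Ganea reduction used to derive Corollary~\ref{cor} from Theorem~\ref{main}. Your extra care in noting that $r$-connectivity of the universal cover survives passage to a simplicial model is a detail the paper leaves implicit.
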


%    Text of article.

%    Bibliographies can be prepared with BibTeX using amsplain,
%    amsalpha, or (for "historical" overviews) natbib style.

\end{document}